\documentclass[a4paper]{amsart}
  \usepackage{amssymb,exscale,bbm,mathrsfs}
  \usepackage[utf8]{inputenc}

%
 \newtheorem{theorem}{Theorem}[section]

 \theoremstyle{definition}
 \newtheorem{definition}[theorem]{Definition}
 \theoremstyle{remark}

 \numberwithin{equation}{section}

\begin{document}

%

%
  \renewcommand {\Re}{\text{Re}}
  \renewcommand {\Im}{\text{Im}}
  \newcommand {\upi}{{\mathrm \pi}}
  \newcommand {\sfrac}[2] { {{}^{#1}\!\!/\!{}_{#2}}} 
  \newcommand {\pihalbe}{\sfrac{\upi}2}
  \newcommand {\einhalb}{\sfrac{1}2}
  \newcommand {\Borel}{{\mathcal Bo}}
  \newcommand {\Poisson}{{\mathscr P}}
  \newcommand {\cH}{\mathscr H}  
  \newcommand {\cY}{\mathscr Y}  
  \newcommand {\CC}{\mathbb C}  
  \newcommand {\EE}{\mathbb E}
  \newcommand {\NN}{\mathbb N}
  \newcommand {\PP}{\mathbb P}
  \newcommand {\ZZ}{\mathbb Z}
  \newcommand {\RR}{\mathbb R}
  \newcommand {\cT}{\mathcal T}
  \newcommand {\BOUNDED}{\mathcal B}
  \newcommand {\DOMAIN}{\mathcal D}
  \newcommand {\FOURIER}{\mathcal F}
  \newcommand {\LAPLACE}{\mathcal L}
  \newcommand {\HOERMANDER}[1]{{\mathcal H}^{#1}}
  \newcommand {\WEIGHT}[1]{ {\langle{#1}\rangle} }
  \newcommand {\eins} {\mathbbm 1}
  \newcommand {\al}{\alpha}
  \newcommand {\la}{\lambda}
  \newcommand {\eps}{\varepsilon}
  \newcommand {\Ga}{\Gamma}
  \newcommand {\ga}{\gamma}
  \newcommand {\om}{\omega}
  \newcommand {\Om}{\Omega}
  \newcommand {\Xm}{X_{-1}}
  \newcommand {\SECT}[1] {S_{{#1}}}
  \newcommand {\STRIP}[1] {{\rm St}_{{#1}}}
  \newcommand {\norm}[1] {\| #1 \|}  
  \newcommand {\abs}[1] {|#1|}
  \newcommand {\biggabs}[1] {\bigg|#1\bigg|}
  \newcommand {\lrnorm}[1]{\left\| #1 \right\|}
  \newcommand {\bignorm}[1]{\bigl\| #1 \bigr\|}
  \newcommand {\Bignorm}[1]{\Bigl\| #1 \Bigr\|}
  \newcommand {\Biggnorm}[1]{\Biggl\| #1 \Biggr\|}
  \newcommand {\biggnorm}[1]{\biggl\| #1 \biggr\|}
  \newcommand {\Bigidual}[3] {\Bigl\langle #1, #2 \Bigr\rangle_{#3}}
  \newcommand {\bigidual}[3] {\bigl\langle #1, #2 \bigr\rangle_{#3}}
  \newcommand {\idual}[3] {\langle #1, #2 \rangle_{#3}}
  \newcommand {\Bigdual}[2] {\Bigidual{#1}{#2}{}}
  \newcommand {\bigdual}[2] {\bigidual{#1}{#2}{}}
  \newcommand {\dual}[2] {\idual{#1}{#2}{} }
  \newcommand {\SP}[2]{ [#1 | #2] }
  \newcommand {\weg} {\backslash}
  \newcommand {\SUCHTHAT}{:\;}
  \newcommand {\emb} {\hookrightarrow}
  \newcommand {\phitilde}{ {\widetilde \phi}}
\renewcommand{\labelenumi} {(\alph{enumi})}    
\renewcommand{\labelenumii}{(\roman{enumii})}
\renewcommand{\theenumi} {(\alph{enumi})}      
\renewcommand{\theenumii}{(\roman{enumii})}    
  \newcommand{\ud}{{\mathrm d}}

\allowdisplaybreaks

\title[The Weiss conjecture and weak norms]{The Weiss conjecture and weak norms}
\author[Bernhard H. Haak]{Bernhard H. Haak}
\address{%
Institut de Math\'ematiques de Bordeaux\\Universit\'e Bordeaux 1\\351, cours de
la Lib\'eration\\33405 Talence CEDEX\\FRANCE}
\email{bernhard.haak@math.u-bordeaux1.fr}
\thanks{This work was partially supported by the ANR project ANR-09-BLAN-0058-01}
\subjclass{47D06,93C25,46E30}
\keywords{Observation of linear systems,  Weiss
  conjecture, Lorentz spaces}
\date{\today}

\begin{abstract}
  In this note we show that for analytic semigroups the so-called Weiss
  condition of uniform boundedness of the operators  
\[
  Re(\la)^\einhalb C(\la+A)^{-1}, \qquad Re(\la)>0
\]
on the complex right half plane and weak Lebesgue
$L^{2,\infty}$--admissibility are equivalent. Moreover, we show that
the weak Lebesgue norm is best possible in the sense that it is the
endpoint for the 'Weiss conjecture' within the scale of Lorentz spaces
$L^{p,q}$.
\end{abstract}

\maketitle

\section{Introduction}
In this note we are concerned with linear control systems of the form
\begin{equation}\label{eq:control-system}
\left\{
\begin{array}{lcl}
 x'(t)+A x(t) & = & 0\quad (t>0),\\
 x(0) & = & x_0,\\
 y(t) & = & C x(t)\quad (t>0),
\end{array}
\right.
\end{equation}
where $-A$ is the generator of a strongly continuous semigroup
$(T(\cdot))$ on a Banach space $X$. The function $x(\cdot)$ takes
values in $X$, and the function $y(\cdot)$ takes values in a 
Banach space $Y$. The observation operator
$C$ may be an unbounded operator from $X$ to $Y$. 
A commonly used minimal assumption on $C$ is
that $C$ is  bounded $X_1\to Y$ where $X_1$ denotes the domain $D(A)$
of $A$ equipped with the graph norm. We refer, e.g., to
\cite{Salamon, Staffans,Weiss:admiss-observation,
  Weiss:Admissibility-of-unbounded}. 

\begin{definition}
Let $\cY_\infty$ be a space of functions $\RR_+\to Y$ and let, for each
$\tau>0$, denote $\cY_\tau$ the restricted space of functions on
$[0,\tau]$. Then the system \eqref{eq:control-system} is called
$\cY_\tau$-\emph{admissible} for $\tau\in (0, \infty]$  if 
the output of the system \eqref{eq:control-system}
depends continuously on initial state, i.e. if the mapping
\[
  X \to  \cY_\tau, \qquad
  x_0  \mapsto y(\cdot)
\]
is continuous. 
\end{definition}

If $X$ and $Y$ are Hilbert spaces, a natural choice for $\cY_\tau$ is
$\cY_\tau = L^2([0, \tau], Y)$.  We mainly focus on infinite-time
admissibility, that is the case $\tau{=}\infty$. We will write
shorthand $L^2$--admissibility instead of $L^2(\RR_+;
Y)$--admissibility.  In other situations, other norms such as the
$L^p$--norm may be useful (see
e.g. \cite{HaakKunstmann:NavierStokes}).

In this note we discuss the failure of the Weiss conjecture. In order
to do so, we treat the case where $\cY$ is a Lorentz-Bochner space
$L^{p,q}(0,\tau; Y)$. We recall some basic definitions and properties
of these spaces. We refer to \cite{Grafakos,Hunt:Lorentz-spaces} for
references and further results if $Y=\CC$. The vector-valued spaces
are discussed e.g. in \cite{BlascoGregori:vector-valued-Lorentz}.
We recall the definitions.  Let $Y$ be a Banach space. For a
measurable, $Y$-valued function $f$ on a measure space $(\Omega,
\mu)$, define the distribution function
\[
d_f(\al) = \mu\left( \{ \om \in \Omega \SUCHTHAT 
          \norm{ f(\om) }>\al \} \right) 
\]
of $f$ and the non-decreasing rearrangement $f^*$ of $f$ as
\[
   f^*(t) = \inf\{ s>0 \SUCHTHAT  d_f(s) \le t \}.
\]
If $f$ is a continuous, real-valued, positive and decreasing function
on $[0, \infty)$, it is easy to see that $f^*= f$. For $1\le q\le
\infty$ and $p> 1$, the Lorentz-Bochner space $L^{p, q}(\Omega;
Y)$ is defined as the set of all measurable functions such that the
(quasi)-norm
\[
\norm{f}_{L^{p, q}(o, \tau; X)} 
= \left\{
  \begin{array}{ll} \displaystyle
    \left( \int_0^\infty \bigl( t^{\sfrac1p} f^*(t) \bigr)^q\, \tfrac{\ud t}t
    \right)^{\sfrac1q}  & \text{if } 1\le q< \infty \\
   \phantom{tatat} \\
    \sup\{\al^{\sfrac1p} d_f(\al) \SUCHTHAT \al>0\} & \text{if } q= \infty
  \end{array}
\right.
\]
is finite. If $Y=\CC$ we will simply write $L^{p,q}(\Omega)$.
Notice that by Fubinis theorem, $L^{p, p}(\Omega) = L^p(\Omega)$.  The
space $L^{p, \infty}$ is also called weak-Lebesgue space. A typical
weak-$L^p(\RR_+)$ function that is not in $L^p(\RR_+)$ is $f(x) =
|x|^{-\sfrac1p}$. Lorentz spaces are ``natural'' function spaces in
since they are real interpolation spaces between usual Lebesgue
spaces, see \cite{Triebel:interpolation}. As such, they appear
in the context of the Weiss conjecture, as will be explained
in the next section in detail.

\noindent Recall the definition of a sectorial operator: a densely
defined operator $A$ on a Banach space $X$ is called sectorial 
of angle $\om\in [0, \upi)$ if the spectrum of $A$ is contained in the
open sector $S_\om = \{ z\in \CC^*\SUCHTHAT |\arg(z)|<\om \}$ and if
for all larger angles $\theta \in (\om, \upi)$, the operators 
\[
  \{ \la (\la+A)^{-1} \SUCHTHAT   \la \not \in S_\theta \}
\]
are uniformly bounded. Negative generators of bounded
$C_0$--semigroups are sectorial of angle $\pihalbe$. Bounded analytic
$C_0$--semigroups are characterised by uniform boundedness of the operators
$tAT(t)$, $t>0$. They are precisely those semigroups whose (negative)
generator is sectorial of angle $<\pihalbe$ (see
e.g. \cite{EngelNagel} for details).

\section{The Weiss condition and decay rates of observed semigroup orbits}

\begin{definition}
Let $-A$ be the generator of a bounded strongly continuous semigroup
on a Banach space $X$ and $C\in \BOUNDED( \DOMAIN(A), Y)$ be an
observation operator. We say that $(A, C)$ satisfies the {\em Weiss
  condition} if the operators
\begin{equation}   \label{eq:weiss-condition}
  Re(\la)^\einhalb C(\la+A)^{-1}, \qquad Re(\la)>0
\end{equation}
are uniformly  bounded in $B(X, Y)$ when $\la$ runs through the open right half
plane.
\end{definition}

\noindent By the Laplace transform, 
\begin{equation}
  \label{eq:laplace-trafo}
    C (\la{+}A)^{-1}x = \int_0^\infty e^{-\la t}\; C T(t)x \, \ud t
\end{equation}
for $x\in \DOMAIN(A)$. If $C$ is $L^2$--admissible, i.e. if
$CT(\cdot)x \in L^2(\RR_+; Y)$, taking norms in
(\ref{eq:laplace-trafo}) and using Cauchy-Schwarz inequality
(\ref{eq:weiss-condition}) follows. The \emph{Weiss conjecture}
\cite{Weiss:conjectures} states that if $X$ and $Y$ are Hilbert
spaces, $L^2$--admissibility and the Weiss condition
(\ref{eq:weiss-condition}) are equivalent. This equivalence is known
to be true in a certain number of cases, for instance for normal
semigroups \cite{Weiss:conjectures} or semigroups of contractions with
scalar output \cite{JacobPartington:contraction}. However, the
conjecture is wrong, even in a restricted version were the output
space is $Y=\CC$ \cite{JacobZwart:counterexable-Weiss-conj}. For more
results and references concerning the Weiss conjecture we refer to the
survey \cite{JacobPartington:survey}.
Recall the following result of Le~Merdy.

 \begin{theorem}[ {\cite[Theorem 4.1]{LeMerdy:weiss-conj}}]
 Let $T(t)$ be a bounded analytic semigroup on a Banach space
 $X$. Assume that its generator $-A$ is injective and
 that $C$ satisfies the Weiss condition {\rm (\ref{eq:weiss-condition})}. 
 Then $C$ is (infinite time) $L^2$--admissible provided that
 $A$ admits upper square function estimates.
 \end{theorem}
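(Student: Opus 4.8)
The plan is to establish the defining inequality of infinite-time $L^2$--admissibility, namely $\int_0^\infty\norm{Ce^{-tA}x}_Y^2\,\ud t\lesssim\norm{x}_X^2$ for $x$ in a dense subspace (writing $T(t)=e^{-tA}$). The proof falls into two conceptually distinct parts: an elementary ``weak'' pointwise bound extracted from the Weiss condition together with analyticity, and a genuine square-function step that upgrades this weak bound into the desired $L^2$--estimate.

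First I would show that the Weiss condition \eqref{eq:weiss-condition} forces the time-domain bound $\sup_{t>0}\norm{t^{\einhalb}Ce^{-tA}}_{\BOUNDED(X,Y)}=:M_0<\infty$. Starting from the inverse Laplace representation $Ce^{-tA}x=\frac{1}{2\pi i}\int_{\Re(\la)=c}e^{\la t}C(\la+A)^{-1}x\,\ud\la$ on a vertical line with $c>0$, I would integrate by parts twice in $\la$ to move two derivatives onto the resolvent, producing $\frac{2}{t^2}\int_{\Re(\la)=c}e^{\la t}C(\la+A)^{-3}x\,\ud\la$. On this line the Weiss condition gives $\norm{C(\la+A)^{-1}}\le K\Re(\la)^{-\einhalb}=Kc^{-\einhalb}$, while bounded analyticity (sectoriality of angle ${<}\pihalbe$) gives $\norm{(\la+A)^{-1}}\le M/\abs{\la}$ throughout the right half plane; hence $\norm{C(\la+A)^{-3}x}\le KM^2c^{-\einhalb}\abs{\la}^{-2}\norm{x}$, and the resulting $\tau$--integral $\int_{\RR}(c^2+\tau^2)^{-1}\,\ud\tau=\pi/c$ now converges. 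Optimising the free abscissa $c=3/(2t)$ then yields exactly $\norm{Ce^{-tA}x}\le M_0\,t^{-\einhalb}\norm{x}$.

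Next I would convert this into an $L^2$ statement. Since $\norm{CAe^{-uA}x}\lesssim u^{-3/2}\norm{x}$ (factor $CAe^{-uA}=Ce^{-(u/2)A}Ae^{-(u/2)A}$ and combine $M_0$ with $\norm{Ae^{-sA}}\le M/s$), the tail integral converges, and integrating $\tfrac{\ud}{\ud u}Ce^{-uA}x=-CAe^{-uA}x$ from $t$ to $\infty$ gives the representation $Ce^{-tA}x=\int_t^\infty CAe^{-uA}x\,\ud u$, the boundary term at infinity vanishing by the decay from the previous step. Applying the dual Hardy inequality $\int_0^\infty\big(\int_t^\infty\eta(u)\,\ud u\big)^2\,\ud t\le 4\int_0^\infty u^2\eta(u)^2\,\ud u$ (one integration by parts and Cauchy--Schwarz) with $\eta(u)=\norm{CAe^{-uA}x}$ reduces matters to $\int_0^\infty u^2\norm{CAe^{-uA}x}^2\,\ud u\lesssim\norm{x}^2$. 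Here I factor once more, $uCAe^{-uA}x=(u^{\einhalb}Ce^{-(u/2)A})(u^{\einhalb}Ae^{-(u/2)A}x)$; the first factor is bounded by $\sqrt2\,M_0$ (the weak estimate again), so after the substitution $v=u/2$ the integral is dominated by $M_0^2\int_0^\infty\norm{vAe^{-vA}x}^2\,\tfrac{\ud v}{v}$, which is $\lesssim\norm{x}^2$ precisely by the assumed upper square function estimate for $A$ (taking $\psi(z)=ze^{-z}$). Chaining the inequalities gives $\int_0^\infty\norm{Ce^{-tA}x}^2\,\ud t\lesssim M_0^2\norm{x}^2$.

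I expect the main obstacle to be conceptual rather than computational: the Weiss condition only ever delivers the \emph{weak} bound $\norm{Ce^{-tA}x}\lesssim t^{-\einhalb}\norm{x}$, and squaring it and integrating diverges logarithmically, which is exactly why the unrestricted conjecture fails. The point is that one must not square the pointwise bound; instead the square-function hypothesis supplies genuine $L^2$--control on the $A$--side, and the weak Weiss bound enters only as a uniform multiplier. The technical subtleties I would have to handle with care are the validity of the Laplace inversion and the double integration by parts (vanishing of boundary terms, first for $x\in\DOMAIN(A^2)$ and then extended by density), and the role of injectivity of $A$, which ensures that the homogeneous functional calculus underlying the square-function estimate is available on all of $X$.
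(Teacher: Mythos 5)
First, a point of order: the paper does not actually prove this theorem --- it is stated as a quotation of \cite[Theorem 4.1]{LeMerdy:weiss-conj} --- so there is no in-paper proof to compare against; the comparison is with Le Merdy's original argument and with the closely related computation the paper does carry out for Theorem~\ref{thm:schwache-L2-zulaessigkeit}. Your proof is correct, and it is a genuinely more elementary route. Your first step (Weiss condition plus analyticity gives $\norm{CT(t)}_{X\to Y}\lesssim t^{-\einhalb}$) is sound but heavier than necessary: instead of Laplace inversion with two integrations by parts, one can factor $CT(t)=C(\tfrac1t{+}A)^{-1}\cdot(\tfrac1t{+}A)T(t)$, where the first factor has norm $\le K t^{\einhalb}$ by \eqref{eq:weiss-condition} at $\la=\tfrac1t$ and the second has norm $\lesssim t^{-1}$ since $\norm{T(t)}$ and $\norm{tAT(t)}$ are uniformly bounded; this is also the step the paper itself proves (inside Theorem~\ref{thm:schwache-L2-zulaessigkeit}), there via real interpolation with $Z=(X,\dot X_1)_{\einhalb,1}$. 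Your second step --- the representation $CT(t)x=\int_t^\infty CAT(u)x\,\ud u$, the dual Hardy inequality with constant $4$, and the factorization $uCAT(u)x=(u^{\einhalb}CT(u/2))(u^{\einhalb}AT(u/2)x)$ reducing everything to $\int_0^\infty\norm{vAT(v)x}^2\,\tfrac{\ud v}{v}\lesssim\norm{x}^2$ --- is exactly the right mechanism: the square-function hypothesis supplies the $L^2$ content, and the Weiss bound enters only as a uniform multiplier, never squared; the numerology ($\sqrt2\,M_0$, the substitution $v=u/2$) checks out. This replaces Le Merdy's original proof, which is cast in the $H^\infty$-functional-calculus framework (reproducing formulas and equivalences of square functions); yours is self-contained and quantitative, his is better suited to variants (general auxiliary functions, dual statements for control operators). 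Two caveats you mention but should make fully explicit: the hypothesis must be usable for the particular function $\psi(z)=ze^{-z}$, which rests on the standard change-of-auxiliary-function lemma for \emph{injective} sectorial operators (this is precisely where injectivity of $A$ is consumed); and the identities should be established first for $x$ in a dense subspace such as $\DOMAIN(A^2)$, then extended by density and Fatou.
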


 It is known \cite{LeMerdy:weiss-conj} that the extra assumption of
 upper square function estimates in Le~Merdy's theorem cannot dropped:
 it suffices to observe that $C=A^\einhalb$ satisfies
 {\rm (\ref{eq:weiss-condition})}, and that admissibility of $A^\einhalb$
 actually {\em is } an upper square function estimate.  What can be
 obtained instead of $L^2$--admissibility when dropping the assumption
 of upper square function estimates? We give an answer in the next two
 theorems.

\begin{theorem}\label{thm:schwache-L2-zulaessigkeit}
  Let $T(t)$ be an exponentially stable  analytic semigroup on a
  Banach space $X$. Then the following conditions are equivalent.
  \begin{enumerate}
  \item \label{item:weiss-cond} The Weiss condition  {\rm (\ref{eq:weiss-condition})}
  \item \label{item:decay-rate} There is a constant $K>0$ such that
\begin{equation}   \label{eq:norm-estimate}
    \norm{ CT(t) x}_Y   \le K t^{-\einhalb} \norm{ x}_X,\qquad t>0
\end{equation}
  \item \label{item:weak-admiss} $C$ is $L^{2,\infty}$ admissible. 
  \end{enumerate}
\end{theorem}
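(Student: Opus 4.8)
The plan is to prove the cyclic chain of implications \ref{item:weiss-cond}$\Rightarrow$\ref{item:decay-rate}$\Rightarrow$\ref{item:weak-admiss}$\Rightarrow$\ref{item:weiss-cond}, so that each condition is obtained from the previous one. The implication \ref{item:decay-rate}$\Rightarrow$\ref{item:weak-admiss} is immediate: for fixed $x\in X$ the bound \eqref{eq:norm-estimate} says that the scalar function $t\mapsto\norm{CT(t)x}_Y$ is dominated pointwise by $K\norm{x}_X\,t^{-\einhalb}$, which is the prototypical weak-$L^2$ function. Since $t\mapsto t^{-\einhalb}$ is already nonincreasing it equals its own rearrangement, and a direct computation gives $\norm{t^{-\einhalb}}_{L^{2,\infty}(\RR_+)}=\sup_{t>0}t^{\einhalb}t^{-\einhalb}=1$. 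Domination of rearrangements therefore yields $\norm{CT(\cdot)x}_{L^{2,\infty}(\RR_+;Y)}\le K\norm{x}_X$, that is, $C$ is $L^{2,\infty}$-admissible with constant $K$.

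For \ref{item:weak-admiss}$\Rightarrow$\ref{item:weiss-cond} I would imitate the classical Laplace/Cauchy--Schwarz argument from the introduction, replacing Cauchy--Schwarz by a Lorentz-space duality. Starting from \eqref{eq:laplace-trafo} for $x\in\DOMAIN(A)$ and taking norms gives $\norm{C(\la+A)^{-1}x}_Y\le\int_0^\infty e^{-\Re(\la)t}\,\norm{CT(t)x}_Y\,\ud t$. The kernel $t\mapsto e^{-at}$, with $a=\Re(\la)>0$, lies in $L^{2,1}(\RR_+)$, and a one-line scaling computation gives $\norm{e^{-a\cdot}}_{L^{2,1}(\RR_+)}=\Ga(\einhalb)\,a^{-\einhalb}$, while $t\mapsto\norm{CT(t)x}_Y$ lies in $L^{2,\infty}(\RR_+)$ by admissibility. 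The Hölder inequality for Lorentz spaces (the duality pairing of $L^{2,1}$ with $L^{2,\infty}$) then bounds the integral by $c\,\Re(\la)^{-\einhalb}\,\norm{CT(\cdot)x}_{L^{2,\infty}}\le cM\,\Re(\la)^{-\einhalb}\norm{x}_X$, where $M$ is the admissibility constant. Multiplying by $\Re(\la)^{\einhalb}$ and extending from $\DOMAIN(A)$ to $X$ by density gives the Weiss condition.

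The implication \ref{item:weiss-cond}$\Rightarrow$\ref{item:decay-rate} carries the real content, and is where I expect the crux to lie. The idea is to exploit analyticity through the factorisation
\[
 CT(t)=C(s+A)^{-1}(s+A)T(t)=C(s+A)^{-1}\bigl(sT(t)+AT(t)\bigr),
\]
valid for every $s>0$ and $t>0$ since $T(t)x\in\DOMAIN(A)$. Estimating the first factor by the Weiss bound on the positive real axis, $\norm{C(s+A)^{-1}}\le Ms^{-\einhalb}$, and the second by the bounded-analytic-semigroup estimates $\norm{T(t)}\le M_0$ and $\norm{tAT(t)}\le c$ recalled in the introduction, yields $\norm{CT(t)x}_Y\le Ms^{-\einhalb}\bigl(M_0 s+c\,t^{-1}\bigr)\norm{x}_X$ for all $s>0$. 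Minimising in the free parameter, the right-hand side is smallest at $s=c/(M_0 t)$, where it equals $2M\sqrt{M_0 c}\,t^{-\einhalb}\norm{x}_X$; this is precisely \eqref{eq:norm-estimate}. Note that only the \emph{real} values $\la=s$ of the Weiss condition are used in this direction. The decisive and non-routine step is this splitting of $CT(t)$ into a ``Weiss factor'' $C(s+A)^{-1}$ and an ``analytic-semigroup factor'' $(s+A)T(t)$, followed by optimisation in $s$; once it is found, the remaining estimates are standard, and exponential stability enters only to guarantee the uniform bound $\norm{T(t)}\le M_0$ and the convergence of the Laplace integral \eqref{eq:laplace-trafo}.
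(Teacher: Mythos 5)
Your proposal is correct, and for two of the three implications it coincides with the paper's proof: \ref{item:decay-rate}$\Rightarrow$\ref{item:weak-admiss} by pointwise domination by the weak-$L^2$ function $t^{-\einhalb}$, and \ref{item:weak-admiss}$\Rightarrow$\ref{item:weiss-cond} via the Laplace transform, the duality of $L^{2,1}$ with $L^{2,\infty}$, and the same computation $\norm{e^{-\Re(\la)\,\cdot}}_{L^{2,1}(\RR_+)}=\Gamma(\einhalb)\,\Re(\la)^{-\einhalb}$. Where you genuinely diverge is the core implication \ref{item:weiss-cond}$\Rightarrow$\ref{item:decay-rate}. The paper obtains the equivalence of \ref{item:weiss-cond} and \ref{item:decay-rate} from an external result, \cite[Corollary 4.7]{HaakHaaseKunstmann}, which identifies the Weiss condition with boundedness of $C$ from the real interpolation space $Z=(X,\dot X_1)_{\einhalb,1}$ to $Y$; it then characterises $Z$ through decay rates of analytic semigroup orbits, and for the converse direction uses the reproducing formula $Cx=c\int_0^\infty CAT(2t)x\,\ud t$. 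You instead give a direct, self-contained argument: the factorisation $CT(t)=C(s+A)^{-1}(s+A)T(t)$ (legitimate since $T(t)x\in\DOMAIN(A)$ for $t>0$ by analyticity), the Weiss bound at real $\la=s$, the analytic-semigroup bounds $\norm{T(t)}\le M_0$ and $\norm{tAT(t)}\le c$, and minimisation over the free parameter at $s=c/(M_0t)$, which indeed yields the constant $2M\sqrt{M_0c}$ in \eqref{eq:norm-estimate}. Your route buys elementarity (no interpolation-space machinery), an explicit constant, and the observation that only real spectral parameters enter this direction; the paper's route buys the structural insight that the Weiss condition is precisely boundedness on an interpolation space, and it delivers both directions of \ref{item:weiss-cond}$\Leftrightarrow$\ref{item:decay-rate} at once, whereas you must close that equivalence through \ref{item:weak-admiss} --- which is logically just as sound, given your cyclic scheme of implications.
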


The next result tells us that the weak Lebesgue norm is optimal in the
sense that it is the endpoint for the 'Weiss conjecture' within the
scale of Lorentz spaces $L^{p,q}$.

\begin{theorem}\label{thm:optimality-counterexample}
  There exists an exponentially stable analytic semigroup $T(t)$ on a
  Hilbert space $H$ and a scalar valued observation operator $C$ such
  that  $CT(\cdot)x$ is not in any $L^{2,q}(0,\tau; Y)$ for whatsoever
  choice of $q<\infty$ or $\tau>0$.  
\end{theorem}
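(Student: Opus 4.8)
The plan is to realise, as an observed orbit of a suitable \emph{non-normal} analytic semigroup, a scalar profile that saturates the weak-$L^2$ bound on a set of infinite logarithmic measure near the origin. The first step is to pin down the target behaviour. By Theorem~\ref{thm:schwache-L2-zulaessigkeit} any system satisfying the Weiss condition produces orbits with $\abs{CT(t)x}\le K t^{-\einhalb}\norm x$, so the normalised profile $t^{\einhalb}g^*(t)$ of $g:=CT(\cdot)x$ is automatically bounded; the point is to keep it bounded \emph{below} on a large set. If $t^{\einhalb}g^*(t)\ge c>0$ for $t$ in a set $E$ of infinite logarithmic measure accumulating at $0$, then for every finite $q$
\[
 \int_0^\tau \bigl(t^{\einhalb}g^*(t)\bigr)^q\,\tfrac{\ud t}t \;\ge\; c^q\int_{E\cap(0,\tau)}\tfrac{\ud t}t \;=\;\infty ,
\]
which is exactly the assertion of the theorem, and a \emph{single} such profile handles all finite $q$ and all $\tau$ simultaneously. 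The model to aim for is $g(t)\asymp t^{-\einhalb}$ on $(0,1)$: a direct estimate of its Laplace transform gives $\abs{\int_0^1 e^{-\mu t}t^{-\einhalb}\,\ud t}\le\sqrt{\upi}\,\Re(\mu)^{-\einhalb}$ on the right half plane, so such an orbit is compatible with the Weiss condition.

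The decisive obstruction then dictates the whole construction. For normal (in particular self-adjoint) semigroups the Weiss conjecture is true, so the Weiss condition would force $L^2=L^{2,2}$-admissibility and hence $g\in L^2$, contradicting a profile that is bounded below on a set of infinite logarithmic measure; equivalently, for self-adjoint $A$ the spectral theorem yields $\int_0^\infty\norm{A^{\einhalb}e^{-tA}x}^2\,\ud t=\tfrac12\norm x^2$, i.e. the upper square function estimate holds and $A^{\einhalb}$ \emph{is} $L^2$-admissible. Hence the example must be genuinely non-normal. I therefore take $A$ sectorial of angle $<\pihalbe$ on a Hilbert space $H$ (so that $e^{-tA}$ is bounded analytic), shifted to have spectrum in $\{\Re z\ge\delta\}$ (exponential stability), with the scalar observation $C$ chosen to behave like $A^{\einhalb}$, for which the Weiss condition is built in. The entire difficulty is then concentrated in making the upper square function estimate fail \emph{maximally} while keeping the sectoriality angle, the decay rate, and the Weiss constant under uniform control — precisely the regime where Le~Merdy's theorem no longer applies.

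For the construction I take eigenvalues $\la_n$ growing geometrically (say $\la_n=2^n$) and unit eigenvectors $\phi_n$ that are strongly \emph{non-orthogonal}, chosen so that the series $\sum_n\phi_n$ converges in $H$ with uniformly controlled partial sums (a difference-type system such as $\phi_n\asymp(e_n-e_{n+1})/\sqrt2$ being the guiding picture for the cancellation), and I set $A\phi_n=\la_n\phi_n$, $C\phi_n=\la_n^{\einhalb}$. With $x_0=c\sum_n\phi_n\in H$ (finite energy, thanks to the cancellation in the Gram matrix of the $\phi_n$) the observed orbit is
\[
 g(t)=CT(t)x_0=c\sum_n \la_n^{\einhalb}e^{-\la_n t},
\]
and near each scale $t\sim\la_m^{-1}$ the $m$-th term dominates and gives $t^{\einhalb}\abs{g(t)}\asymp c$. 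Since the scales $\la_m^{-1}$ accumulate at $0$, the normalised profile stays bounded below on a set of infinite logarithmic measure, delivering the divergence above. The same coefficients saturating every scale would cost infinite energy in an orthonormal (normal) model; it is exactly the non-orthogonality of the $\phi_n$ that makes $x_0$ summable, i.e. the mechanism by which the orbit's $L^{2,q}$-mass exceeds the state energy.

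The main obstacle is to verify that this diagonalisation in a badly conditioned eigenbasis still defines a \emph{sectorial} operator of angle $<\pihalbe$ (so that the semigroup really is analytic and the construction is legitimate) and that the associated $C$ satisfies the Weiss condition uniformly, despite the near-degeneracy of the $\phi_n$ forced on us by the finite-energy requirement. Controlling $(\mu+A)^{-1}$ and the functional $C(\mu+A)^{-1}$ uniformly on the right half plane against the conditioning of $\{\phi_n\}$ is the genuinely delicate estimate; concretely I would realise the $\phi_n$ through an $\ell^2$-direct sum of small non-normal (e.g.\ $2\times2$) blocks whose non-normality is tuned against the geometric spectral gap, so that the per-block resolvent and Weiss bounds remain summable while the square function diverges. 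This is the analytic-semigroup analogue of the Jacob--Zwart counterexample to the Weiss conjecture, amplified from the single exponent $q=2$ to the full range $q<\infty$ by forcing the normalised profile to stay bounded below rather than merely fail to be square integrable.
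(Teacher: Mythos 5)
Your plan has a fatal conceptual flaw: the profile you are aiming at cannot be an observed orbit of any system of the kind you want to build. You require an orbit $g=CT(\cdot)x_0$ of a system satisfying the Weiss condition with $t^{\einhalb}g^*(t)\ge c>0$ on a set of infinite logarithmic measure; any such set must accumulate at $0$. But the Weiss condition rules this out. For analytic semigroups it is equivalent (Theorem~\ref{thm:schwache-L2-zulaessigkeit}) to the uniform bound $\abs{CT(t)y}\le K t^{-\einhalb}\norm{y}$ for all $y\in H$, and this bound automatically self-improves to little-$o$ on every \emph{fixed} orbit: for $x_0\in H$ and $y\in \DOMAIN(A)$,
\[
 t^{\einhalb}\abs{CT(t)x_0}
 \;\le\; t^{\einhalb}\,\norm{C}_{\DOMAIN(A)\to\CC}\,\norm{T(t)y}_{\DOMAIN(A)}
 \;+\; K\,\norm{x_0-y},
\]
and since $\norm{T(t)y}_{\DOMAIN(A)}\le M\norm{y}_{\DOMAIN(A)}$ uniformly in $t$ (the semigroup is bounded and commutes with $A$), letting $t\to0$ and then $\norm{x_0-y}\to0$ by density of $\DOMAIN(A)$ gives $t^{\einhalb}\abs{CT(t)x_0}\to0$ as $t\to0^+$. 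This passes to the rearrangement (the bound $\abs{g(t)}\le\eps t^{-\einhalb}$ near $0$ forces $d_g(\al)\le\eps^2\al^{-2}$ for large $\al$, hence $s^{\einhalb}g^*(s)\to 0$), so the set where $t^{\einhalb}g^*(t)\ge c$ stays away from $0$ and has finite logarithmic measure. A single finite-energy state can never saturate the weak-$L^2$ bound near $t=0$; the strongest achievable behaviour is a profile with some decay, such as the paper's $(1{+}\abs{\log t})^{-\sfrac1q}t^{-\einhalb}$. (An all-$q$ example is not excluded by this obstruction — a decay like $1/\log\log(1/t)$ would still defeat every finite $q$ — but it would require coefficients tending to zero more slowly than any $n^{-\sfrac1q}$, and your mechanism insists on no decay at all.)

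The same impossibility surfaces concretely inside your construction, before any resolvent or sectoriality estimate: $x_0=c\sum_n\phi_n$ with $\norm{\phi_n}=1$ is never an element of $H$, because the terms of a norm-convergent series must tend to zero in norm, while $\norm{c\phi_n}=\abs{c}$. Cancellation in the Gram matrix can make partial sums \emph{bounded}, never Cauchy: in your guiding example $\phi_n=(e_n-e_{n+1})/\sqrt2$ one has $\sum_{n=M}^N\phi_n=(e_M-e_{N+1})/\sqrt2$, of norm $1$ for all $M\le N$. Nor can you substitute a weak limit: the candidate orbit $v(t)=c\sum_n e^{-\la_n t}\phi_n$ (which does converge for each $t>0$) is not Cauchy as $t\to0^+$, since in the orthonormal coordinates $\norm{v(t)-\tfrac{c}{\sqrt2}e_1}^2=\tfrac{c^2}{2}\bigl(\abs{e^{-\la_1 t}-1}^2+\sum_{n\ge2}\abs{e^{-\la_n t}-e^{-\la_{n-1}t}}^2\bigr)$, and for geometric $\la_n$ there is, for every small $t$, an index with $\la_{n-1}t\in[\einhalb,1]$, making the sum bounded below by a fixed positive constant; so $v$ is not the orbit of any state under a $C_0$-semigroup. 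In other words, the step you describe as ``finite energy, thanks to the cancellation in the Gram matrix'' is the one that fails, not the sectoriality and Weiss estimates you flag as delicate. The paper's proof accepts this constraint from the outset: it works with the concrete conditional basis $e_{2n}(s)=\abs{s}^{\beta}e^{ins}$, $e_{2n+1}(s)=\abs{s}^{\beta}e^{-ins}$, $\beta=\tfrac1{2q'}$, the explicit state $x(s)=\abs{s}^{-\beta}\in L^2(-\upi,\upi)$, and coefficients $\xi_n\sim n^{-\sfrac1q}\ge0$ that \emph{do} tend to zero, yielding $\abs{CT(t)x}\gtrsim(1{+}\abs{\log t})^{-\sfrac1q}t^{-\einhalb}\notin L^{2,q}(0,\tau)$ — an example defeating $L^{2,r}$ for $r\le q$, built separately for each finite $q$, rather than the single all-$q$ example your proposal aims at.
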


\begin{proof}[Proof of Theorem~\ref{thm:schwache-L2-zulaessigkeit}]
  \noindent We first show the equivalence of \ref{item:weiss-cond} and
  \ref{item:decay-rate}.  In \cite[Corollary 4.7]{HaakHaaseKunstmann}
  it is shown that the Weiss condition is equivalent to $C$ being
  bounded from $Z$ to $Y$, where $Z := (X, \dot X_1)_{\einhalb, 1}$ is
  the real interpolation space between $X$ and the homogeneous domain
  space $\dot X_1$ of $A$. Since $A$ is invertible, $\dot X_1 =
  X_1$. On the other hand, for analytic semigroups, the space $Z$ is
  characterised by the fact that
\[
 \norm{ C T(t) }_{Z \to Y} \lesssim t^{-\einhalb}. 
\]
This is essentially shown in \cite[Proposition
3.9]{HaakKunstmann:NavierStokes}. For the sake of completeness we
repeat the short argument.  Indeed, by analyticity of the semigroup,
\[
\norm{T(t)}_{X  \to \dot X_1} \lesssim t^{-1}
\quad\text{and}\quad
\norm{T(t)}_{\dot X_1 \to \dot X_1} \lesssim 1
\]
and from the boundedness of $C$ on $X_1$ we deduce
\[
\norm{CT(t)}_{X \to Y} \lesssim t^{-1}
\quad\text{and}\quad
\norm{CT(t)}_{\dot X_1 \to Y} \lesssim 1.
\]
The estimate $Z\to Y$ then follows by interpolation. For
the converse, notice that
\begin{align*}
\norm{ C x }_Y & = c \lrnorm{ \int_0^\infty C AT(2t)x \,\ud t }_Y 
                 \le c  \int_0^\infty\lrnorm{C T(t) AT(t)x }_Y\,\ud t\\
               & \le c' \int_0^\infty t^{-\einhalb} \lrnorm{AT(t)x }_Y\,\ud t
                 \sim c' \norm{ x }_Z.
\end{align*}
Putting both results together we obtain that the Weiss condition {\rm
  (\ref{eq:weiss-condition})} is equivalent to {\rm
  (\ref{eq:norm-estimate})}.  It is clear that \ref{item:decay-rate}
implies \ref{item:weak-admiss}. The remaining implication can be shown
in Hilbert spaces by a dyadic decomposition argument and Fourier
transform. The following quicker and more general argument
has been pointed out to us by Peer Kunstmann: let $\Re(\la)>0$ and
$f(t):=e^{-Re(\la) t}$ on $[0, \infty)$. Then $f^*(t)= f(t)$ since $f$
is decreasing and continuous. One has therefore
\[
\norm{ f }_{L^{2,1}(\RR_+)} = \int_0^\infty t^{\einhalb} f^*(t) \tfrac{\ud t}t 
= \Re(\la)^{-\einhalb} \int_0^\infty s^{-\einhalb} e^{-s} \,\ud s 
= \Re(\la)^{-\einhalb} \Gamma(\einhalb).
\]
Now use the duality $(L^{2,1}(\RR_+))' = L^{2,\infty}(\RR_+)$ (see
\cite[Theorem 1.4.17]{Grafakos})  and the Laplace transform 
 {\rm (\ref{eq:laplace-trafo})}  to conclude that 
\begin{align*}
\norm{ C (\la+A)^{-1}x }_Y 
& \le  \int_0^\infty \bignorm{ C T(t)x }_Y f(t) \,\ud t\\
& \le  \norm{ f }_{L^{2,1}(0, \infty)} \bignorm{  CT(\cdot)x}_{L^{2, \infty}(\RR_+; Y)} \\
& \le C   \Re(\la)^{-\einhalb} \norm{x}. \qedhere
\end{align*}
\end{proof}

\begin{proof}[Proof of Theorem~\ref{thm:optimality-counterexample}]
  Recall that Lorentz spaces satisfy $L^{2,p} \subset L^{2,q}$ for
  $p\le q$. It suffices therefore to show that $C$ is not
  $L^{2,q}$--admissible for any finite $q>2$.  To this end we fix
  $2<q<\infty$. The idea is to extend the counterexample of Jacob and
  Zwart \cite{JacobZwart:counterexable-Weiss-conj} in the following
  sense: not only do we pass from $q=2$ to $q \ge 2$ (recall
  $L^{2,2}=L^2$), but, in contrast with their abstract argument
  referencing to interpolation sequences, we simply provide an
  explicit element $x\in H$ for which the observed orbit $CT(t)x$ does
  not lie in the Lorentz space $L^{2,q}(0,\tau; Y)$. The main idea
  however is identical to the construction of Jacob and Zwart: on
  Hilbert spaces there exist conditional bases $(e_n)$ that satisfy
   \begin{enumerate}
   \item\label{item:nicht-bessel} $(e_n)$ is not Besselian, i.e. there
     is no constant $c_B>0$ such that
\[  
   \sum |\alpha_k|^2 \le c_B \lrnorm{ \sum \alpha_k e_k }. 
\]
\item\label{item:aber-hilbertian} $(e_n)$ is Hilbertian, i.e. there is
  a constant $c_H>0$ such that
\[ 
    \lrnorm{ \sum \alpha_k e_k }  \le c_H \sum |\alpha_k|^2. 
\]
    \item\label{item:inf-positif} $\inf \norm{ e_n} >0$
   \end{enumerate}
   A concrete example is given in \cite[Example
   II.11.2]{Singer:bases1}: let $\beta = \tfrac1{2q'} \in (\tfrac14,
   \tfrac12)$ and consider
\[
e_{2n}(s) = |s|^\beta e^{ins} \quad \text{and}\quad e_{2n+1}(s) =  |s|^\beta e^{-ins} 
\]
on $H=L^2(-\upi, \upi)$. Let $T(t)$ be the exponentially stable and
analytic semigroup given by $T(t) e_n = e^{-4^n t} e_n$ and consider
an observation operator $C: H \to \CC$ given by $C e_n = 2^n$ for $n
\in \NN$. Thus, if $x = \sum \xi_k e_k$,
\[
   | CT(t) x| = \left| \sum_k 2^k e^{-4^k t} \xi_k \right|.
\]

\noindent Let $x(s) = |s|^{-\beta} \eins_{[-\upi,
  \upi]}$. Since $x$ is square integrable, there exist $(\xi_n)$ such
that
\[
    x(s) = \sum_{n\ge 0} \xi_n e_n(s).
\]
However, necessarily, 
\[
\xi_{2n} = \tfrac1{2\upi}\int_{-\upi}^\upi x(s) |s|^{-\beta} e^{-ins}\,\ud s 
   \quad \text{and}\quad 
\xi_{2n+1} = \tfrac1{2\upi} \int_{-\upi}^\upi x(s) |s|^{-\beta} e^{ins}\,\ud s.
\]
Let us determine the growth order of the coefficients: by symmetry,
$\xi_{2n} = \xi_{2n+1}$. Let $\ga>0$. Then 
\[
   \int_0^\upi s^\ga e^{ins}\tfrac{\ud s}{s}  
= n^{-\ga} \int_0^{\upi n} s^{\ga}   e^{is}\tfrac{\ud s}{s} .
\]
The function $\varphi(z) = z^{\ga-1} e^{iz}$ is holomorphic on the
open right half plane. By deplacing the integral from $[0, n\upi]$ to
the positive imaginary axis,
\begin{align*} 
 \int_0^\upi s^\ga e^{ins}\tfrac{\ud s}{s}
 &=n^{-\ga}  \left[ e^{i\upi\ga/2} \int_0^{\upi  n} s^\ga
   e^{-s}\tfrac{\ud s}{s} - \int_0^\pihalbe (n\upi e^{i\theta})^\ga
   e^{in\upi e^{i\theta}}\,d\theta  \right].
\end{align*}
The first integral behaves as $n^{-\ga} e^{i\upi\ga/2} \Gamma(\ga) +
O(\tfrac1n)$;
the second is $O(\tfrac1n)$ as can be seen by estimating $e^{-n\upi \sin(t)}$ using
$\tfrac2\upi t \le \sin(t) \le t$ on $[0,\pihalbe]$. By complex conjugation,
\[
    \int_{-\upi}^0 |s|^\ga e^{ins}\tfrac{\ud s}{s}
 = \int_0^\upi s^\ga e^{-ins}\tfrac{\ud s}{s}
 = \overline{ \int_0^\upi s^\ga e^{ins}\tfrac{\ud s}{s}}.
\]
so that 
\[
   \xi_n = 2 n^{-\ga} \cos(\ga\upi/2) \Gamma(\ga)+O(\tfrac1n).
\]
In our case, $\ga=1-2\beta$, and so $\xi_n \sim n^{2\beta-1} =
n^{-\sfrac1q}$ when $n\to +\infty$.  Finally notice that $\xi_n\ge 0$ since 
\[
\xi_n = 2\sum_{l=0}^{n-1} \int_0^{2\upi} (2\upi l+x)^{\ga-1} \cos(x)\,dx
\]
and each term in the sum is positive : let $f(x) = (2\upi
l+x)^{\ga-1}$. Then $f \in C^\infty(\RR_+)$ is positive, decreasing
and convex. Therefore,  $g(x) = f(x)-f(x{+}\upi)$ satisfies 
$g'(x) = f'(x)-f'(x{+}\upi) = \upi f''(\eta) \le 0$. So $g$ is
decreasing and positive. Thus,
\[
 \int_0^{2\upi} f(x) \cos(x)\,dx = \int_0^\upi \bigl(
 f(x)-f(x{+}\upi)\bigr)  \cos(x)\,dx \ge 0.
\]
Now let us come back to the observed semigroup
orbits: recall that for exponentially stable semigroups admissibility
in arbitrary small finite time or in infinite time are equivalent.
The lack of $L^{2,q}$--integrability must therefore happen near the
origin. For our choice of $x\in H$ and $t \in [4^{-n-1}, 4^{-n})$, one
has
\[
  | CT(t) x|  =  \sum_{k=0}^\infty \xi_k  2^k e^{-4^k t}
     \ge \xi_n  2^n e^{-1} \sim  n^{-\sfrac1q} 2^n \sim
     (1{+}|\log(t)|)^{-\sfrac1q} t^{-\einhalb}  =: f(t).
\]
But $f \not\in L^{2, q}(0, \tau)$ howsoever we choose $\tau\in (0,1)$.
\end{proof}

\medskip

\noindent {\bf Open problem:} 
An inspection of the proof of Theorem
\ref{thm:schwache-L2-zulaessigkeit} shows that the implication
\ref{item:decay-rate} $\Rightarrow$ \ref{item:weak-admiss}
$\Rightarrow$ \ref{item:weiss-cond} always holds. It is only for
\ref{item:weiss-cond} $\Rightarrow$ \ref{item:decay-rate} that
analyticity is used. This hypothesis is not optimal since the
implication is trivially true in all cases when $C$ is even
$L^2$--admissible (i.e. when the Weiss conjecture holds). It follows
that on Hilbert spaces the cases of (a) norm-continuous semigroups,
(b) normal semigroups, (c) exponentially stable right invertible
semigroups and (d) in case $Y=\CC$ contraction semigroups and (e)
diagonal semigroups on a Riesz basis are covered (see
\cite{JacobPartington:survey} for references).  Our analysis of an
non-Riesz basis example underlines further the idea that the Weiss
condition {\rm (\ref{eq:weiss-condition})} and $L^{2,
  \infty}$--admissibility should be equivalent on Hilbert spaces.

\medskip

\noindent We thank Peer Kunstmann and Hans Zwart for pointing out a
calculation error in a preliminary version of this article.


\def\cprime{$'$}
\providecommand{\bysame}{\leavevmode\hbox to3em{\hrulefill}\thinspace}

\end{document}